\newcommand{\so}{\mathfrak{so}}
\DeclareMathOperator{\ad}{ad}
\DeclareMathOperator{\Exp}{Exp}
\DeclareMathOperator{\End}{End}
\theoremstyle{plain}
\newtheorem{theorem}{Theorem}[section]
\newtheorem{lemma}[theorem]{Lemma}
\newtheorem*{utheorem}{Main Theorem}
\theoremstyle{definition}
\newtheorem{example}[theorem]{Example}
\theoremstyle{remark}
\newtheorem{remark}[theorem]{Remark}
\numberwithin{equation}{section}
\begin{document}

\title[The distribution of symmetry]{The distribution of symmetry of a naturally reductive nilpotent Lie group}

\author{Silvio Reggiani}

\address{CONICET and Universidad Nacional de Rosario, Dpto. de Matemática, ECEN-FCEIA, Av.\ Pellegrini 250, 2000 Rosario, Argentina }

\email{reggiani@fceia.unr.edu.ar}

\urladdr{\url{http://www.fceia.unr.edu.ar/~reggiani}}

\date{\today}

\thanks{Supported by CONICET. Partially supported by ANPCyT and SeCyT-UNR}

\keywords{Index of symmetry, distribution of symmetry, foliation of symmetry, naturally reductive space, nilpotent Lie group}

\subjclass[2010]{53C30, 53C35}

\begin{abstract}
 We show that the distribution of symmetry of a naturally reductive nilpotent Lie group coincides with the invariant distribution induced by the set of fixed vectors of the isotropy. This extends a known result on compact naturally reductive spaces. We also address the study of the quotient by the foliation of symmetry.
\end{abstract}

\maketitle

\section{Introduction}

The concept of index of symmetry $i_{\mathfrak s}(M)$ of a homogeneous Riemannian manifold $M$ is introduced in \cite{olmos-reggiani-tamaru-2014}. Roughly speaking, $i_{\mathfrak s}(M)$ a geometric invariant which measures how far is $M$ from being a symmetric space. Two objects closely related to the index of symmetry are the distribution and the foliation of symmetry. The distribution of symmetry, whose rank equals the index of symmetry, is an integrable distribution with totally geodesic leaves which is invariant under the full isometry group of $M$. Moreover, its integral manifolds are extrinsically isometric to a global symmetric space and form the foliation of symmetry. When $M$ is compact, there are several general results about the index of symmetry and large families of examples. For instance, $i_{\mathfrak s}(M)$ is known for compact naturally reductive spaces and flag manifolds (see \cite{olmos-reggiani-tamaru-2014,podesta-2015}). Moreover, the article \cite{berndt-olmos-reggiani-2017} contains several structure results and the classification of compact spaces with low co-index of symmetry. The situation in the non compact case is more complicated. To our best knowledge, the only result about non compact spaces appear in \cite{reggiani-2016}, where $i_{\mathfrak s}(M)$ is computed for $3$-dimensional unimodular Lie groups.

The goal of this short note is to extend the results of \cite{olmos-reggiani-tamaru-2014} to a very important family of non compact naturally reductive spaces. More precisely, it is proved in \cite{olmos-reggiani-tamaru-2014} that the distribution of symmetry of a compact (irreducible) naturally reductive space, with respect to an appropriate presentation, coincides with the invariant distribution induced by the set of fixed points of the isotropy group and the leaf of symmetry is always a symmetric space of the group type. We manage to prove the following theorem.

\begin{utheorem}
  Let $N$ be a naturally reductive nilpotent Lie group without Euclidean factor. Then the distribution of symmetry of $N$ coincides with $I(N)$-invariant distribution determined by the fixed points of the isotropy representation $I(N)_p \to T_pN$ at a given point $p \in N$.
\end{utheorem}

The techniques we used to prove this theorem are completely different from the ones used in \cite{olmos-reggiani-tamaru-2014}. Instead we use a beautiful presentation given by J.\ Lauret of $2$-nilpotent Lie groups arising from orthogonal representations of compact Lie algebras \cite{lauret-1999}. Recall that, in contrast with the compact case, the leaf of symmetry is not a symmetric space of the group type but an Euclidean space (see Remark \ref{leaf-of-symmetry}). We also study the quotient by the foliation of symmetry and recover some non trivial vector bundles over Euclidean spaces. 

\section{Preliminaries}
\label{sec:preliminaries}

\subsection{The index of symmetry of a homogeneous manifold}

Let us briefly review the basic notions on the so-called index of symmetry. For more details, see \cite{olmos-reggiani-tamaru-2014}. Let $M$ be a homogeneous Riemannian manifold and let $p \in M$. Let us denote by $\mathfrak K(M)$ the Lie algebra of Killing vector fields on $M$ and consider
\[
  \mathfrak s_p = \{X_p: X \in \mathfrak K(M) \text{ and } (\nabla X)_p = 0\},
\]
where $\nabla$ denotes the Levi-Civita connection of $M$. It is easy to see that $\mathfrak s_p$ is an invariant subspace for the isotropy representation $I(M)_p \to T_pM$, where $I(M)$ is the full isometry group of $M$. Since $I(M)$ is transitive on $M$, we get an $I(M)$-invariant distribution $\mathfrak s$ on $M$, which is called the \emph{distribution of symmetry} of $M$. It is known that $\mathfrak s$ is an autoparallel distribution of $M$ (i.e., it is integrable with totally geodesic leaves). The \emph{index of symmetry} of $M$ is defined to be the rank of $\mathfrak s$. Recall that $M$ is a symmetric space if and only if its index of symmetry equals $\dim M$. Moreover, as shown in \cite{reggiani-2016}, if $M$ is not a symmetric space, then its index of symmetry is at most $\dim M - 2$. Finally, if we denote by $\mathcal L_p$ the leaf of the distribution of symmetry by $p \in M$, then $\mathcal L$ is the so-called \emph{foliation of symmetry} of $M$. Notice that the leaves of the foliation $\mathcal L$ are totally geodesic submanifolds of $M$ which are extrinsically globally symmetric spaces (see \cite{eschenburg-olmos-1994}).

\subsection{Naturally reductive nilpotent Lie groups}

Recall that a \emph{naturally reductive space} is a homogeneous Riemannian space $M = G/H$ such that $\mathfrak g = \mathfrak h \oplus \mathfrak m$ where $\mathfrak g$ and $\mathfrak h$ are the Lie algebras of $G$ and $H$ respectively, $\mathfrak m$ is a subspace of $\mathfrak g$ such that $[\mathfrak h, \mathfrak m] \subset \mathfrak m$ and, for all $X \in \mathfrak m$, $[X, -\,]_{\mathfrak m} \in \so(\mathfrak m)$, where the inner product on $\mathfrak m$ comes from the natural identification with $T_pM$. Geometrically, this means that $\Exp(tX) \cdot p$ is a geodesic through $p$ for all $X \in \mathfrak m$.

We are interested in \emph{naturally reductive nilpotent Lie groups} which are by definition nilpotent Lie groups endowed with a naturally reductive left-invariant metric (with respect to the full isometry group). In \cite{gordon-1985}, C. Gordon proved that a naturally reductive nilpotent Lie group is at most $2$-step nilpotent. Moreover, by a result due to J.\ Lauret, the $2$-step nilpotent naturally reductive Lie groups without Euclidean factor are the ones constructed in the following way. Let $\mathbb V$ be an Euclidean vector space and $\mathfrak g$ be a compact Lie algebra. Let $\pi: \mathfrak g \to \so(\mathbb V)$ be an orthogonal representation of $\mathfrak g$. Assume further that $\pi$ is faithful and without trivial subrepresentations, i.e., $\bigcap_{x \in \mathfrak g}\ker\pi(x) = 0$. Let us consider an $\ad$-invariant inner product $\langle-,-\rangle_{\mathfrak g}$ on $\mathfrak g$ and denote by $\langle-,-\rangle_{\mathbb V}$ the inner product on $\mathbb V$. We define on $\mathfrak n = \mathfrak g \oplus \mathbb V$ the Lie bracket $[-,-]_{\mathfrak n}$ given by
\begin{itemize}
  \item $[\mathfrak g, \mathfrak g]_{\mathfrak n}  = [\mathfrak g, \mathbb V] = 0$;
  \item $[\mathbb V, \mathbb V]_{\mathfrak n}  \subset \mathfrak g$;
  \item $\langle[v, w], x\rangle_{\mathfrak g} = \langle\pi(x)v, w\rangle_{\mathbb V}$ for all $x \in \mathfrak g$, $v, w \in \mathbb V$.
\end{itemize}

Clearly, $\mathfrak n$ is a $2$-step nilpotent Lie algebra. Let $N$ be the $2$-step nilpotent Lie group with Lie algebra $N$, and let us consider on $N$ the left-invariant metric induced by the inner product $\langle-,-\rangle = \langle-,-\rangle_{\mathfrak g} \oplus \langle-,-\rangle_{\mathbb V}$.

\begin{theorem}
  [\cite{lauret-1999}]
  $(N, \langle-,-\rangle)$ is a $2$-step nilpotent naturally reductive Lie group. Reciprocally, every naturally reductive nilpotent Lie group without Euclidean factor arises in this way.
\end{theorem}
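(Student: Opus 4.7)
For the forward implication I build the transitive group of isometries $\tilde G = G \ltimes N$, where $G$ is the connected compact Lie group with Lie algebra $\mathfrak g$ acting on $N$ by the automorphisms obtained by exponentiating the derivation $D_x$ of $\mathfrak n$ that equals $\ad x$ on the $\mathfrak g$-summand of $\mathfrak n$ and $\pi(x)$ on $\mathbb V$. The orthogonality of $\pi$ together with the $\ad$-invariance of $\langle -, -\rangle_{\mathfrak g}$ imply $D_x \in \so(\mathfrak n)$, so $G$ acts by isometries of $(N, \langle -, -\rangle)$ fixing the identity and $\tilde G / G = N$ isometrically, with isotropy $\mathfrak h = \mathfrak g_{\mathrm{iso}}$ sitting inside $\tilde{\mathfrak g} = \mathfrak g_{\mathrm{iso}} \oplus \mathfrak g_{\mathfrak n} \oplus \mathbb V$.

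As reductive complement of $\mathfrak h$ I take the ``diagonal'' subspace $\mathfrak m = \{x_{\mathrm{iso}} + x_{\mathfrak n} : x \in \mathfrak g\} \oplus \mathbb V$, with inner product pulled back through the canonical projection $\mathfrak m \to \mathfrak n$, $x_{\mathrm{iso}} + x_{\mathfrak n} + v \mapsto x + v$ (this is the geometric identification of $\mathfrak m$ with $T_eN$). The inclusion $[\mathfrak h, \mathfrak m] \subset \mathfrak m$ is immediate from $[x_{\mathrm{iso}}, y_{\mathrm{iso}} + y_{\mathfrak n} + v] = [x, y]_{\mathrm{iso}} + [x, y]_{\mathfrak n} + \pi(x) v$. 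To verify $\ad_X|_{\mathfrak m} \in \so(\mathfrak m)$ for $X \in \mathfrak m$, I expand $[X, Y]_{\tilde{\mathfrak g}}$ for $X, Y \in \mathfrak m$, project onto $\mathfrak m$ along $\mathfrak h$, and check that $\langle [X, Y]_{\mathfrak m}, Z\rangle_{\mathfrak m} + \langle Y, [X, Z]_{\mathfrak m}\rangle_{\mathfrak m}$ splits into four pieces each vanishing individually: the purely $\mathfrak g$-valued piece cancels by $\ad$-invariance of $\langle -, -\rangle_{\mathfrak g}$; a cross term $\langle \pi(u) v, w\rangle - \langle w, \pi(u) v\rangle$ cancels by symmetry of the inner product; a piece $\langle \pi(y) w, t\rangle + \langle w, \pi(y) t\rangle$ vanishes because $\pi(y) \in \so(\mathbb V)$; and a residual pair of opposite $\pi(z)$-terms cancel trivially. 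The defining relation $\langle [v, w]_{\mathfrak n}, x\rangle_{\mathfrak g} = \langle \pi(x) v, w\rangle_{\mathbb V}$ is what allows one to translate between the two pairings.

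For the converse, start with a naturally reductive nilpotent Lie group $(N, \langle -, -\rangle)$ without Euclidean factor. Gordon's theorem gives $\mathfrak n$ at most $2$-step nilpotent, so $\mathfrak n = \mathfrak z \oplus \mathfrak v$ with $\mathfrak z$ the center and $\mathfrak v = \mathfrak z^\perp$; the no-Euclidean-factor hypothesis forces $\mathfrak z = [\mathfrak v, \mathfrak v]$, from which both faithfulness and absence of trivial subrepresentations of any $\pi$ defined by $\langle \pi(x) v, w\rangle = \langle [v, w]_{\mathfrak n}, x\rangle$ follow by straightforward arguments. A naturally reductive presentation $N = \tilde G / H$ with $\tilde{\mathfrak g} = \mathfrak h \oplus \mathfrak m$ produces a compact Lie algebra $\mathfrak h$ acting on $\mathfrak m \cong \mathfrak n$ and preserving the $\mathfrak z \oplus \mathfrak v$-splitting; an appropriate subalgebra of $\mathfrak h$, playing the role of $\mathfrak g_{\mathrm{iso}}$ in the forward construction, endows $\mathfrak g := \mathfrak z$ with a compact Lie algebra structure, and $\pi$ is then dictated by the relation above. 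The main obstacle is exactly this last step of the converse: canonically recovering a compact Lie algebra structure on the center $\mathfrak z$ from the naturally reductive data and matching the $\mathfrak h$-action on $\mathfrak v$ with the representation $\pi$, which requires a careful analysis of the transvection algebra and the isotropy of $(N, \langle -, -\rangle)$.
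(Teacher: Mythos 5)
Your forward implication is essentially correct, and it is the standard argument (the paper itself gives no proof of this theorem, citing \cite{lauret-1999}; Lauret's proof of this direction is the same construction). Setting $\tilde{\mathfrak g} = \mathfrak g_{\mathrm{iso}} \ltimes \mathfrak n$, taking $\mathfrak h = \mathfrak g_{\mathrm{iso}}$ and the diagonal complement $\mathfrak m = \{x_{\mathrm{iso}} + x_{\mathfrak n} : x \in \mathfrak g\} \oplus \mathbb V$ with the pulled-back inner product does work: one should first check that $D_x = (\ad x, \pi(x))$ is in fact a derivation of $\mathfrak n$ (this uses the defining relation of the bracket together with skew-symmetry of $\pi(x)$ and $\ad$-invariance of $\langle-,-\rangle_{\mathfrak g}$ — you use this implicitly but never verify it), and the four cancellations you list in the natural reductivity identity are exactly the ones that occur; note that the $\mathfrak g$-valued part of $[X,Y]_{\mathfrak m}$ is $2[x,y]_{\mathfrak g} + [v,w]_{\mathfrak n}$, the factor $2$ being harmless precisely because of $\ad$-invariance.

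The converse, however, has a genuine gap, and it is not a small one: it is the actual content of the theorem. Defining $\pi$ by $\langle \pi(x)v, w\rangle_{\mathbb V} = \langle [v,w]_{\mathfrak n}, x\rangle_{\mathfrak g}$ for $x \in \mathfrak z$ only produces a \emph{linear} map $\pi : \mathfrak z \to \so(\mathfrak v)$; your faithfulness and no-trivial-subrepresentation arguments are fine, as is the reduction $\mathfrak z = [\mathfrak v,\mathfrak v]$ from the absence of a Euclidean factor. But to exhibit $N$ as one of the groups in the construction you must produce a Lie bracket on $\mathfrak g := \mathfrak z$ that is compact, for which the restricted inner product is $\ad$-invariant, and for which $\pi$ is a homomorphism of Lie algebras — equivalently, you must show that the subspace $\pi(\mathfrak z) \subset \so(\mathfrak v)$ is closed under the commutator. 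This closure property fails for a generic metric $2$-step nilpotent algebra; it is precisely the characterization of natural reductivity due to Gordon and Lauret, and extracting it from the mere existence of some naturally reductive presentation $\tilde G/H$ (whose reductive complement need not be adapted to $\mathfrak z \oplus \mathfrak v$, and whose isotropy $\mathfrak h$ is a priori unrelated to $\mathfrak z$) requires the transvection-algebra analysis that you name as ``the main obstacle'' but do not carry out. Writing that an ``appropriate subalgebra of $\mathfrak h$ \dots endows $\mathfrak g := \mathfrak z$ with a compact Lie algebra structure'' asserts the conclusion rather than proving it. As it stands, the proposal establishes half of the statement.
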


The connected component of the full isometry group of $N$ can be computed as follows. It is well known that the Lie algebra $\mathfrak i(N)$ of the full isometry group $I(N)$ is given by $\mathfrak k \ltimes \mathfrak n$, where the Killing fields identified with $\mathfrak n$ are the right-invariant fields on $N$, and the Lie algebra $\mathfrak k$ of the isotropy group consists of the orthogonal derivations of $\mathfrak n$. A complete description of $\mathfrak k$ is also given in \cite{lauret-1999}: let us write $\mathfrak g = \bar{\mathfrak g} \oplus \mathfrak c$, where $\mathfrak c$ is the center of $\mathfrak g$ and $[\bar{\mathfrak g}, \bar{\mathfrak g}] = \bar{\mathfrak g}$ is semisimple. Then $\mathfrak k = \bar{\mathfrak g} \oplus \mathfrak u$, where
\[
  \mathfrak u = \mathfrak \End_\pi(\mathbb V) \cap \so(\mathbb V) = \{A \in \so(\mathbb V): [A, \pi(x)] = 0 \text{ for all } x \in \mathfrak g\}
\]
are the orthogonal intertwining operators for the representation $\pi$. Recall that the restriction of the (derived) isotropy representation to $\bar{\mathfrak g}$ is given by $(\ad x, \pi(x))$ for all $x \in \bar{\mathfrak g}$.

We finally note that, according to \cite[Lemma 3.11]{lauret-1999}, if one writes $\mathbb V = \mathbb V_1 \oplus \cdots \oplus \mathbb V_k$ as an orthogonal sum of irreducible subrepresentations of $\pi$, then for each $1 \le i \le k$ there exists a skew-symmetric operator $J_i: \mathbb V_i \to \mathbb V_i$ such that $J_i^2 = -I$ and, for each $h \in \mathfrak c$,
\begin{equation}
  \label{eq:2}
  \pi(h) = \lambda_i(h)J_i
\end{equation}
for some $\lambda_i(h) \in \mathbb R$.

Covariant derivatives of left-invariant vector fields on $N$ are easy to compute (see for instance \cite{eberlein-1994}). Given $X \in \mathfrak n$, we denote by $X^*$ the associated Killing right-invariant field. The following result is straightforward.

\begin{lemma}\label{sec:natur-reduct-nilp}
  Consider $\mathfrak n = \mathfrak g \oplus \mathbb V$ as before and let $X, Y \in \mathbb V$, $Z, T \in \mathfrak g$. Then
  \begin{enumerate}
  \item $(\nabla_{X^*}Y^*)_e = -\frac12[X^*, Y^*]_e$;
  \item $(\nabla_{X^*}Z^*)_e = (\nabla_{Z^*}X^*)_e =  -\frac12(\pi(Z)X)_e$;
  \item $(\nabla_{Z^*}T^*)_e = 0$.
  \end{enumerate}
\end{lemma}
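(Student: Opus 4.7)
My plan is to prove all three identities by directly applying the Koszul formula to the right-invariant Killing fields at the identity. The key input beyond Koszul itself is the way inner products of right-invariant fields vary along $N$: by left-invariance of the metric one has $\langle U^*, V^*\rangle(g) = \langle \operatorname{Ad}(g^{-1})U, \operatorname{Ad}(g^{-1})V\rangle$, so differentiating at $e$ in the direction of $U$ yields the replacement
\begin{equation*}
  U^*_e\langle V^*, W^*\rangle = -\langle [U, V]_{\mathfrak n}, W\rangle - \langle V, [U, W]_{\mathfrak n}\rangle
\end{equation*}
for the terms that would vanish in the classical left-invariant Koszul computation.

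Plugging this in, I would handle the three cases separately, exploiting the defining features of Lauret's construction: $\mathfrak g$ is central in $\mathfrak n$, $[\mathbb V, \mathbb V]_{\mathfrak n}\subset \mathfrak g$, and $\langle [v, w], x\rangle_{\mathfrak g} = \langle \pi(x) v, w\rangle_{\mathbb V}$. For $X, Y \in \mathbb V$ every bracket arising in Koszul takes values in $\mathfrak g$ and pairs nontrivially only with the $\mathfrak g$-component of the test vector; after the six terms combine, one reads off (1). For $X \in \mathbb V$ and $Z \in \mathfrak g$ the centrality of $\mathfrak g$ kills most terms, and the survivors assemble through the defining identity into $-\tfrac12 \pi(Z) X$; the symmetry $\nabla_{X^*} Z^* = \nabla_{Z^*} X^*$ at $e$ then follows from the torsion-free property together with $[X, Z]_{\mathfrak n} = 0$. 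For $Z, T \in \mathfrak g$ every derivative term and every bracket term vanishes outright, yielding (3).

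No real mathematical obstacle is expected; the proof is essentially a bookkeeping exercise once Koszul is set up. The only genuine pitfall is the sign convention for brackets of right-invariant vector fields: as fields on $N$ their bracket at $e$ equals $-[X, Y]_{\mathfrak n}$, whereas in the statement $[X^*, Y^*]_e$ is to be read as the Lie-algebra bracket $[X, Y]_{\mathfrak n}$, consistent with viewing $\mathfrak n$ as a subalgebra of the isometry Lie algebra $\mathfrak k \ltimes \mathfrak n$. Under that identification the Koszul output matches the formulas as stated.
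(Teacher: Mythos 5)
Your proof is correct. The paper gives no argument here at all---it declares the lemma straightforward and cites Eberlein's formulas for the Levi-Civita connection of a left-invariant metric on a $2$-step nilpotent group, which are stated for \emph{left}-invariant fields ($\nabla_XY=\tfrac12[X,Y]$ for $X,Y$ orthogonal to the center, etc.)---whereas you compute directly with the right-invariant Killing fields. Your route is self-contained: the identity $U^*_e\langle V^*,W^*\rangle=-\langle[U,V],W\rangle-\langle V,[U,W]\rangle$ is exactly what replaces the vanishing derivative terms of the left-invariant Koszul computation, and collecting the six terms yields
\[
2\langle(\nabla_{X^*}Y^*)_e,Z\rangle=-\langle[X,Y],Z\rangle-\langle[X,Z],Y\rangle-\langle[Y,Z],X\rangle,
\]
from which (1)--(3) follow at once using that $\mathfrak g$ is central in $\mathfrak n$, that $[\mathbb V,\mathbb V]_{\mathfrak n}\subset\mathfrak g\perp\mathbb V$, and the defining relation $\langle[v,w],x\rangle_{\mathfrak g}=\langle\pi(x)v,w\rangle_{\mathbb V}$. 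You are also right to single out the sign convention as the only genuine trap: item (1) holds with $[X^*,Y^*]_e$ read as $[X,Y]_{\mathfrak n}$ (the vector-field bracket of right-invariant fields being its negative), and this is the reading consistent both with the skew-symmetry of $(\nabla Y^*)_e$ for a Killing field and with how the lemma is applied in Section 3. What the citation buys the paper is brevity; what your computation buys is that no translation between left- and right-invariant fields at the identity is needed.
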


\section{Proof of the Main Theorem}

Let us consider and arbitrary Killing field $\tilde Y$ on $N$, and write $\tilde Y = Y^* + Y^{\mathfrak k}$, where $Y^*$ is a right-invariant Killing field and $X^{\mathfrak k}$ is an element of the isotropy subalgebra, i.e., $(Y^{\mathfrak k})_e = 0$. We again decompose $Y^*$ according to the decomposition $\mathfrak n = \mathfrak g \oplus \mathbb V = (\bar{\mathfrak g} \oplus \mathfrak c) \oplus \mathbb V$, and $Y^{\mathfrak k}$ according to $\mathfrak k = \bar{\mathfrak g} \oplus \mathfrak u$. So
\begin{equation}
  \label{eq:1}
  \tilde Y = Y^*_{\mathfrak g} + Y^*_{\mathbb V} + Y^{\mathfrak k} = Y^*_{\bar{\mathfrak g}} + Y^*_{\mathfrak c} + Y^*_{\mathbb V} + Y^{\mathfrak k}_{\bar{\mathfrak g}} + Y^{\mathfrak k}_{\mathfrak u}.
\end{equation}

Let us study the covariant derivative of $\tilde Y$ at the identity.

\begin{lemma}
  If $(\nabla\tilde Y)_e = 0$, then $Y^*_{\bar{\mathfrak g}} = Y^*_{\mathbb V} =  Y^{\mathfrak k}_{\bar{\mathfrak g}} = 0$.
\end{lemma}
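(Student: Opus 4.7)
The plan is to compute the $\so(\mathfrak n)$-valued operator $(\nabla\tilde Y)_e$ by letting it act on an arbitrary $V \in T_e N \simeq \mathfrak n = \bar{\mathfrak g}\oplus \mathfrak c \oplus \mathbb V$, and then to extract three independent algebraic conditions by projecting the resulting identity onto the three summands. For the right-invariant part $Y^*$ the values of $(\nabla_V Y^*)_e$ are supplied directly by Lemma \ref{sec:natur-reduct-nilp}. For the isotropy part $Y^{\mathfrak k}$, since $(Y^{\mathfrak k})_e = 0$ the map $V \mapsto (\nabla_V Y^{\mathfrak k})_e$ is (up to a universal sign) just the action on $\mathfrak n$ of the derivation $Y^{\mathfrak k} \in \mathfrak k = \bar{\mathfrak g}\oplus\mathfrak u$: the $\bar{\mathfrak g}$-summand acts on $\bar{\mathfrak g}$ by $\ad$, trivially on $\mathfrak c$, and on $\mathbb V$ by $\pi$, while $\mathfrak u$ acts only on $\mathbb V$ and commutes there with every $\pi(x)$.

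I would begin with $V = W \in \bar{\mathfrak g}$. Lemma \ref{sec:natur-reduct-nilp} contributes only $-\tfrac12\pi(W)Y_{\mathbb V}$ in the $\mathbb V$-slot, while the isotropy term contributes $\pm[Y^{\mathfrak k}_{\bar{\mathfrak g}},W]$ in the $\bar{\mathfrak g}$-slot. Projecting to $\bar{\mathfrak g}$ and varying $W$ shows that $Y^{\mathfrak k}_{\bar{\mathfrak g}}$ is central in the semisimple algebra $\bar{\mathfrak g}$, so $Y^{\mathfrak k}_{\bar{\mathfrak g}} = 0$; projecting to $\mathbb V$ gives $\pi(W)Y_{\mathbb V}=0$ for all $W\in\bar{\mathfrak g}$. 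Next, for $V = h\in\mathfrak c$ the derivation $Y^{\mathfrak k}$ kills $h$, and only the term $-\tfrac12\pi(h)Y_{\mathbb V}$ survives, so $\pi(h)Y_{\mathbb V}=0$ for every $h\in\mathfrak c$. Combining with the previous step, $Y_{\mathbb V}\in\bigcap_{x\in\mathfrak g}\ker\pi(x)=0$, which is the content of $\pi$ being faithful and without trivial subrepresentations; hence $Y^*_{\mathbb V}=0$.

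Finally I take $V = X\in\mathbb V$. The $\mathfrak g$-component of $(\nabla_X\tilde Y)_e$ is a multiple of $[X,Y_{\mathbb V}]_{\mathfrak n}$, which vanishes automatically from the previous paragraph. The $\mathbb V$-component, after plugging in $Y^{\mathfrak k}_{\bar{\mathfrak g}}=0$, yields an identity of operators on $\mathbb V$ of the shape
\[
  \tfrac12\pi(Y_{\bar{\mathfrak g}}) + \tfrac12\pi(Y_{\mathfrak c}) = \pm\, Y^{\mathfrak k}_{\mathfrak u}.
\]
Now $Y^{\mathfrak k}_{\mathfrak u}\in\mathfrak u$ commutes with all of $\pi(\mathfrak g)$ by definition, and $\pi(Y_{\mathfrak c})$ commutes with $\pi(\mathfrak g)$ because $Y_{\mathfrak c}$ lies in the centre of $\mathfrak g$; consequently $\pi(Y_{\bar{\mathfrak g}})$ commutes with every $\pi(W)$, $W\in\bar{\mathfrak g}$. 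Equivalently $\pi([Y_{\bar{\mathfrak g}},W])=0$, and faithfulness of $\pi$ pushes $Y_{\bar{\mathfrak g}}$ into the centre of $\bar{\mathfrak g}$, which vanishes by semisimplicity; hence $Y^*_{\bar{\mathfrak g}}=0$.

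I expect the main conceptual obstacle to be the last step: ensuring that the derivation $Y^{\mathfrak k}$ and the central summand $Y^*_{\mathfrak c}$ cannot conspire to produce a non-trivial $Y_{\bar{\mathfrak g}}$. This is resolved precisely by the decomposition $\mathfrak k = \bar{\mathfrak g}\oplus\mathfrak u$ together with the semisimplicity of $\bar{\mathfrak g}$, which together force any intertwiner coming from $\pi(\bar{\mathfrak g})$ to be central and hence zero. The bookkeeping of signs between the Levi-Civita connection and the isotropy action is harmless, since every obstruction is set equal to zero; the whole argument is therefore a chase through the five summands in \eqref{eq:1} combined with the two structural inputs \emph{semisimplicity of $\bar{\mathfrak g}$} and \emph{faithfulness of $\pi$ without trivial subrepresentations}.
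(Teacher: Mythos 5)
Your proof is correct and follows essentially the same route as the paper: evaluate $(\nabla_V\tilde Y)_e=0$ for $V$ ranging over the summands of $\mathfrak n$, separate the resulting terms by their orthogonal projections, and invoke semisimplicity of $\bar{\mathfrak g}$ together with faithfulness of $\pi$ and the absence of trivial subrepresentations. The only (harmless) divergence is in the last step, where the paper restricts to an irreducible summand $\mathbb W$ and uses the form $\pi(Y_{\mathfrak c})|_{\mathbb W}=\lambda J$ from \eqref{eq:2}, whereas you argue directly that $\pi(Y_{\mathfrak c})$ commutes with $\pi(\mathfrak g)$ because $Y_{\mathfrak c}$ is central --- a slightly cleaner phrasing of the same commutator argument.
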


\begin{proof}
  Let $X_{\mathfrak g} \in \mathfrak g$ be an central element inside $\mathfrak n$, and denote by $X^*_{\mathfrak g}$ the corresponding right-invariant vector field (which of course coincides with $X_{\mathfrak g}$). Using Lemma~\ref{sec:natur-reduct-nilp}, we have that
  \begin{align*}
    0 & = (\nabla_{X^*_{\mathfrak g}}\tilde Y)_e \\
      & = (\nabla_{X^*_{\mathfrak g}}Y^*_{\mathbb V})_e + (\nabla_{X^*_{\mathfrak g}}Y^{\mathfrak k}_{\bar{\mathfrak g}})_e \\
      & = -\frac12\pi(X_{\mathfrak g})Y_{\mathbb V}|_e + [Y^{\mathfrak k}_{\bar{\mathfrak g}}, {X^*_{\mathfrak g}}]_e,
  \end{align*}
  where the bracket $[Y^{\mathfrak k}_{\bar{\mathfrak g}}, {X^*_{\mathfrak g}}]$ is took in $\mathfrak g$. So, both summands in the last equality belong to two different orthogonal subspaces of $T_eN$. Moreover, since $X^*_{\mathfrak g}$ is arbitrary and $\bar{\mathfrak g}$ is semisimple, we conclude that $Y^*_{\mathbb V} = Y^{\mathfrak k}_{\bar{\mathfrak g}} = 0$. Recall that, now $\tilde Y$ takes the form
  \begin{equation*}
    \label{eq:1bis}
    \tilde Y = Y^*_{\mathfrak g} + Y^{\mathfrak k}_{\mathfrak u} = Y^*_{\bar{\mathfrak g}} + Y^*_{\mathfrak c} + Y^{\mathfrak k}_{\mathfrak u}.
  \end{equation*}

  Let $\mathbb W$ be a $\pi$-irreducible subspace of $\mathbb V$. So, from (\ref{eq:2}), we have that $\pi(Y_{\mathfrak c}) = \lambda J$, for some $\lambda \in \mathbb R$ and $J \in \so(\mathbb W)$ is such that $J^2 = -I$. Notice that, since $\pi$ is faithful, $Y^*_{\bar{\mathfrak g}} \notin \mathbb RJ$, and since $u$ commutes with every element in $\pi(\mathfrak g)$, $Y^*_{\bar{\mathfrak g}} \notin \mathbb RY^{\mathfrak k}_{\mathfrak u}$. Now from Lemma \ref{sec:natur-reduct-nilp} again, for each $w \in \mathbb W$ one gets
  \[
    0 = (\nabla_w\tilde Y)_e = -\frac12 \pi(Y_{\bar{\mathfrak g}})w - \frac{\lambda}{2}Jw + Y^{\mathfrak k}_{\mathfrak u}w.
  \]
  This implies that $Y^*_{\bar{\mathfrak g}} = 0$, since otherwise $Y^{\mathfrak k}_{\mathfrak u}$ would not commute with $\pi(\bar{\mathfrak g})$.
\end{proof}

Keep the notation from the proof of the above lemma. Assuming that $(\nabla \tilde Y)_e = 0$ one gets
\begin{equation}
  \label{eq:3}
  \tilde Y = Y^*_{\mathfrak c} + Y^{\mathfrak k}_{\mathfrak u}
\end{equation}
and when restricted to $\mathbb W$, one has $Y^{\mathfrak k}_{\mathfrak u} = \frac{\lambda}{2}J$. In other words, every element in $\mathfrak c$ is in the direction of the distribution of symmetry through the identity.

\begin{remark}
  Recall that $\mathfrak c$ regarded as a subspace of $T_eN$ is the subspace where the isotropy algebra acts trivially. So, the distribution of symmetry of $N$ is the $I(N)$-invariant distribution induced for the fixed points of the full isotropy representation.
\end{remark}

\begin{example}
 The Heisenberg Lie group $H_n$ can be obtained from the representation $\pi: \mathbb R \to \so(2n)$, where $\pi$ is induced by the multiplication by $\sqrt{-1}$ in $\mathbb R^{2n} \simeq \mathbb C^n$. So, the metric from Section \ref{sec:preliminaries} for this particular representation is the so-called type-$H$ metric, which has index of symmetry $1$.  We want to mention the work Agricola, Ferreira and Friedrich \cite{agricola-ferreira-friedrich-2014} where they classify all the naturally reductive spaces up to dimension $6$. In that article the naturally reductive left-invariant metric in the $5$-dimensional Heisenberg group $H_2$, are attached to the representations $\pi: \mathbb R \to \so(2n)$ where $\pi$ acts as a different multiple of the multiplication by $\sqrt{-1}$ in each copy of $\mathbb C$ inside $\mathbb R^{2n} \simeq \mathbb C^n$. According to our main theorem, all these metrics also have index of symmetry equal to $1$.
\end{example}

\begin{remark}
  \label{leaf-of-symmetry}
  Let $\mathcal L$ be the foliation of symmetry of $N$ (i.e., the one given by the leaves of the distribution of symmetry). With a similar argument as in \cite{reggiani-2016}, one has that the quotient map $N \to N/\mathcal L$ is a Riemannian submersion. Here, $N/\mathcal L$ is isometric to the naturally reductive nilmanifold obtained by the restriction of the representation $\pi|_{\bar{\mathfrak g}}: \bar{\mathfrak g} \to \so(\mathbb V)$, where we ignore the center $\mathfrak c$ of $\mathfrak g$. Note that, in contrast with most of the known examples, the quotients by the leaves of symmetry is not a symmetric space. The leave of symmetry is isometric to the Euclidean space of dimension $\dim\mathfrak c$. We also mention that, when $\mathfrak g = \mathfrak c$ is abelian, the quotient $N \to N/\mathcal L$ can be regarded as a non trivial vector bundles over the Euclidean space $\mathbb V$ with fiber $\mathfrak c$.
\end{remark}

\bibliography{/home/silvio/Dropbox/math/bibtex/mybib}
\bibliographystyle{amsalpha}

\end{document}